\begin{document}
\newtheorem{teorema}{Theorem}
\newtheorem{lemma}{Lemma}
\newtheorem{utv}{Proposition}
\newtheorem{svoistvo}{Property}
\newtheorem{sled}{Corollary}
\newtheorem{con}{Conjecture}
\newtheorem{zam}{Remark}
\newtheorem{quest}{Question}

\author{A. A. Taranenko\thanks{Sobolev Institute of Mathematics, Novosibirsk, Russia; \texttt{taa@math.nsc.ru}}}
\title{On a metric property of perfect colorings}
\date{February 3, 2021}

\maketitle

\begin{abstract} 
Given a perfect coloring of a graph, we prove that  the $L_1$ distance between two rows of the adjacency matrix of the graph is not less than the $L_1$ distance between the corresponding rows of the parameter matrix of the coloring. With the help of an algebraic approach, we deduce corollaries of this result for perfect $2$-colorings, perfect colorings in distance-$l$ graphs and in distance-regular graphs. We also provide examples when the obtained property reject several putative parameter matrices of perfect colorings in infinite graphs.

\textbf{Keywords:} perfect coloring, perfect structure,  $L_1$ distance, circulant graph, square grid, triangular grid.
\end{abstract}

\section{Definitions and main result}

Following~\cite{my.perfstrct}, we consider perfect colorings in a more general setting than perfect colorings of simple graphs or multigraphs.
More specifically, we associate a \textit{graph} $G$ on $n$ vertices with a real $n \times n$-matrix $M$ that is called its \textit{adjacency matrix}. So under a graph we mean an oriented graph with edges labelled by $m_{u,v}$. We use $V(G)$ to denote the vertex set of the graph $G$.

Define a \textit{perfect $k$-coloring}  of a graph $G$ with the \textit{parameter matrix} $S$ to be a partition of the set $V(G)$ into disjoint classes $J_i$, $i = 1, \ldots, k$, such that for all $u \in J_i$ it holds $s_{i,j} = \sum\limits_{v \in J_j} m_{u,v} $. This definition of a perfect coloring generelizes  equitable partitions  introduced by Delsarte~\cite{dels.assch}.

Note that classes $J_i$ can be considered as vertices of some graph $H$ defined by the adjacency matrix $S$. Then  a perfect coloring  can be defined as a map $f: V(G) \rightarrow V(H)$ that puts every vertex $v$ of the graph $G$ to its color $f(v)$, i.e., $f(v) = i$ if and only if $v \in J_i$. 

In many papers, a perfect coloring of simple graph $G$  is said to be  a partition of its vertex set  into color classes such that the colored neighborhood of each vertex is defined by a color of the vertex.

At last, perfect colorings can be treated as special perfect structures~\cite{my.perfstrct}: a perfect coloring is a triple of matrices $(M,P,S)$ connected by a relation $MP = PS$, where $M$ and $S$ are square matrices of orders $n$ and $k$ respectively,  $P$ is a $(0,1)$-matrix of sizes $n \times k$ in which each row contains exactly one unity entry.

In the present note, we bound the $L_1$ distance between rows of the parameter matrix of a perfect coloring by the $L_1$ distance between the corresponding rows of the adjacency matrix of a graph.
 
Recall that the \textit{$L_1$ distance} between two $n$-tuples $x = (x_1, \ldots, x_n)$ and $y = (y_1, \ldots, y_n)$, $x_i, y_i \in \mathbb{R}$,  is
$$d(x,y) = \sum\limits_{i=1}^n |x_i - y_i|.$$
Given a real matrix $A$ of order $n$, we use $[A]^i$ to denote the $i$-th row of $A$.

\begin{teorema} \label{nonexpandth}
Let $f$ be a perfect $k$-coloring  of a graph $G$ with the parameter matrix $S$  and let $M$ be the adjacency matrix of $G$. Then for all $u, v \in V(G)$ we have
$$d([M]^u, [M]^v) \geq d([S]^{f(u)}, [S]^{f(v)}).$$ 
\end{teorema}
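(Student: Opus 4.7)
The plan is to work directly from the matrix identity $MP = PS$ that characterizes a perfect coloring, interpret both sides in terms of rows, and then reduce the desired inequality to a block-wise triangle inequality.

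First I would spell out what $MP = PS$ says row by row. Since each row of $P$ has exactly one unity, with $P_{v,i}=1$ iff $f(v)=i$, I would compute $(MP)_{u,j} = \sum_{v \in J_j} m_{u,v}$, which by the defining property of a perfect coloring equals $s_{f(u),j}$. Hence the $u$-th row of $MP$ coincides with $[S]^{f(u)}$. Subtracting rows corresponding to $u$ and $v$, this gives the key identity
\[
[S]^{f(u)} - [S]^{f(v)} = \bigl([M]^u - [M]^v\bigr)\,P.
\]

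Next I would set $x = [M]^u - [M]^v \in \mathbb{R}^n$ and unpack $\|xP\|_1$ using the block structure induced by the color classes $J_1,\dots,J_k$. Each coordinate $(xP)_j = \sum_{v\in J_j} x_v$, so
\[
d([S]^{f(u)},[S]^{f(v)}) = \sum_{j=1}^k \Bigl|\sum_{v\in J_j} x_v\Bigr| \le \sum_{j=1}^k \sum_{v\in J_j} |x_v| = \sum_{v \in V(G)} |x_v| = d([M]^u,[M]^v),
\]
the single inequality being the ordinary triangle inequality applied within each class $J_j$ and then summed.

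There is essentially no obstacle: the only thing one must verify carefully is the row-wise reading of $MP=PS$, since the matrices $M$ and $S$ are real (not just $(0,1)$) and we are reading the equality in the first factor, which is why the correct substitution on the left produces $[S]^{f(u)}$ rather than some weighted average. Once that is in place, the proof is just a one-line application of the triangle inequality, and it makes transparent when equality holds: precisely when, for every color class $J_j$, the differences $m_{u,v}-m_{v',v}$ for $v\in J_j$ all have the same sign (or are zero).
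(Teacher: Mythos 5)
Your proof is correct and is essentially the paper's own argument: the identity $[M]^uP=[S]^{f(u)}$ is just the matrix-form restatement of $s_{f(u),j}=\sum_{w\in J_j}m_{u,w}$, and the rest is the same class-by-class triangle inequality. The observation about when equality holds is a nice bonus (the paper uses it implicitly in Theorem~\ref{thingraphs}), but the route is the same.
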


\begin{proof}
By the definition,
$$d([S]^{f(u)}, [S]^{f(v)}) = \sum\limits_{j=1}^k |s_{f(u),j} - s_{f(v),j}|.$$
Since $f$ is a perfect coloring, the set $V(G)$  is partitioned into $k$ disjoint subsets $J_1, \ldots, J_k$ such that for every $u \in  V(G)$ we have $s_{f(u),j} = \sum\limits_{w \in J_j} m_{u,w}$. Consequently,
$$ \sum\limits_{j=1}^k |s_{f(u),j} - s_{f(v),j}| =   \sum\limits_{j=1}^k |\sum\limits_{w \in J_j}(m_{u,w} - m_{v,w})|.$$
Using the inequality $|a + b| \leq |a| + |b|$, we deduce 
$$  \sum\limits_{j=1}^k |\sum\limits_{w \in J_j}(m_{u,w} - m_{v,w})| \leq \sum\limits_{w \in V(G)} |m_{u,w} - m_{v,w}|.$$
It only remains to note that 
$$\sum\limits_{w \in V(G)} |m_{u,w} - m_{v,w}| = d([M]^u, [M]^v).$$
\end{proof}

\section{Corollaries for simple graphs}

Let us specialize Theorem~\ref{nonexpandth} for some classes of graphs and colorings. In this section, we assume everywhere that $G$ is an $r$-regular  simple undirected graph with no loops. In other words, the adjacency matrix $M$ of $G$ is a symmetric $(0,1)$-matrix with zeroes within the main diagonal and row sums equal to $r$. 

Given a vertex $v$ of a simple graph $G$, let $\mathcal{N}(v)$ denote the \textit{neighborhood} of the vertex $v$ that is the set of all vertices $u \in V(G)$ such that $u$ and $v$ are adjacent. Then for simple graphs $G$ Theorem~\ref{nonexpandth}  takes  the following form.

\begin{teorema} \label{thingraphs}
Let $G$ be a simple $r$-regular graph and $f$ be a perfect coloring of $G$ with the parameter matrix $S$. Assume that there are vertices $u, v \in V(G)$ of colors $f(u) = i$, $f(v) = j$  such that $|\mathcal{N}(u) \cap \mathcal{N}(v)| = h$. Then
$$d([S]^i, [S]^j) \leq 2(r-h).$$
If the inequality becomes an equality, then  distributions of colors in sets $\mathcal{N}(u) \cap \mathcal{N}(v)$, $\mathcal{N}(u) \setminus \mathcal{N}(v)$, and $\mathcal{N}(v) \setminus \mathcal{N}(u)$ are determined by the parameter matrix $S$ and do not depend on the coloring $f$.
\end{teorema}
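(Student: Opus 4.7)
The plan is to apply Theorem~\ref{nonexpandth} directly and then unpack the case of equality by revisiting the single inequality that appears in its proof.

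For a simple $r$-regular graph, the row $[M]^u$ is the indicator vector of $\mathcal{N}(u)$, so $d([M]^u, [M]^v) = |\mathcal{N}(u) \triangle \mathcal{N}(v)| = 2r - 2h$ by $r$-regularity and the definition of $h$. Theorem~\ref{nonexpandth} then immediately yields $d([S]^i, [S]^j) \leq 2(r-h)$. To treat the equality case, I would return to the proof of Theorem~\ref{nonexpandth}: the only inequality used there is the triangle inequality applied color-class by color-class,
\[
\Bigl|\sum_{w \in J_\ell} (m_{u,w} - m_{v,w})\Bigr| \leq \sum_{w \in J_\ell} |m_{u,w} - m_{v,w}|.
\]
Summing over $\ell$ and comparing with the bound $2(r-h)$ already established, equality in the theorem forces termwise equality in each of these $k$ triangle inequalities. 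Since $m_{u,w} - m_{v,w}$ equals $+1$, $-1$, or $0$ according to whether $w$ lies in $\mathcal{N}(u) \setminus \mathcal{N}(v)$, in $\mathcal{N}(v) \setminus \mathcal{N}(u)$, or in neither of these sets, this tightness condition means that no color class $J_\ell$ can simultaneously meet $\mathcal{N}(u) \setminus \mathcal{N}(v)$ and $\mathcal{N}(v) \setminus \mathcal{N}(u)$.

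To conclude, let $a_\ell, b_\ell, c_\ell$ denote the numbers of vertices of color $\ell$ in the sets $\mathcal{N}(u) \cap \mathcal{N}(v)$, $\mathcal{N}(u) \setminus \mathcal{N}(v)$, and $\mathcal{N}(v) \setminus \mathcal{N}(u)$, respectively. The perfect coloring property gives $s_{i,\ell} = a_\ell + b_\ell$ and $s_{j,\ell} = a_\ell + c_\ell$, and the tightness condition $b_\ell c_\ell = 0$ determines $a_\ell = \min(s_{i,\ell}, s_{j,\ell})$, $b_\ell = \max(s_{i,\ell} - s_{j,\ell}, 0)$, $c_\ell = \max(s_{j,\ell} - s_{i,\ell}, 0)$, all functions of $S$ alone. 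The main (mild) obstacle is phrasing the equality condition cleanly: once one sees that equality in a sum of finitely many triangle inequalities of the same orientation forces equality in each summand, the rigidity statement falls out of the linear system above.
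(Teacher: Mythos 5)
Your proposal is correct and takes essentially the same route as the paper: the bound follows from Theorem~\ref{nonexpandth} together with $d([M]^u,[M]^v)=2(r-h)$, and the equality case is analyzed by forcing termwise equality in the color-class-wise triangle inequalities, i.e.\ that no class meets both $\mathcal{N}(u)\setminus\mathcal{N}(v)$ and $\mathcal{N}(v)\setminus\mathcal{N}(u)$. Your explicit formulas $a_\ell=\min(s_{i,\ell},s_{j,\ell})$, $b_\ell=\max(s_{i,\ell}-s_{j,\ell},0)$, $c_\ell=\max(s_{j,\ell}-s_{i,\ell},0)$ merely spell out what the paper states more briefly.
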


\begin{proof}
The theorem follows from Theorem~\ref{nonexpandth} and the fact that $M(u,v) = 2(r - |\mathcal{N}(u) \cap \mathcal{N}(v)|)$. 

For perfect colorings of simple graphs the equality $d([M]^u, [M]^v) = d([S]^{f(u)}, [S]^{f(v)})$  means that the symmetric difference $\mathcal{N}(u) \Delta \mathcal{N} (v)$ contains exactly $|s_{f(u),j} - s_{f(v), j}|$ vertices of each color $j$. Since $[S]^{f(u)}$ and $[S]^{f(v)}$ are  distributions of colors  in sets  $\mathcal{N}(u)$ and $\mathcal{N}(v)$ respectively,  we know color distributions for sets $\mathcal{N}(u) \cap \mathcal{N}(v)$, $\mathcal{N}(u) \setminus \mathcal{N}(v)$, and $\mathcal{N}(v) \setminus \mathcal{N}(u)$.
\end{proof}

\subsection{Perfect $2$-colorings}

The parameter matrix of perfect colorings in $2$ colors is usually written as
$$S = \left( \begin{array}{cc} a & b \\ c & d \end{array} \right).$$
It is easy to see that if $G$ is an $r$-regular graph, then $S$ has two different eigenvalues: the trivial eigenvalue $\lambda_1 = r$ and the \textit{second eigenvalue} $\lambda_2 = r - (b+c) = a - c$. Since $a + b = c +d = r$, the parameters $b$ and $c$ uniquely define the matrix $S$. Thus we will say that a perfect coloring in colors with the parameter matrix is a \textit{$(b,c)$}-coloring.

\begin{lemma}  \label{2colordist}
Let 
$$S = \left( \begin{array}{cc} a & b \\ c & d \end{array} \right)$$
be the parameter matrix of a perfect $(b,c)$-coloring of an $r$-regular graph $G$.
Then 
$$d([S]^1, [S]^2) = 2 |\lambda_2| = 2 |r - (b+c)|.$$
\end{lemma}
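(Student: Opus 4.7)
The plan is to prove this by a direct computation, using only the row-sum property of the parameter matrix $S$. First I would recall why each row of $S$ sums to $r$: for any vertex $u \in J_i$, we have $\sum_j s_{i,j} = \sum_j \sum_{w \in J_j} m_{u,w} = \sum_{w \in V(G)} m_{u,w} = r$, since $G$ is $r$-regular. Applied to $i = 1, 2$, this gives $a + b = r$ and $c + d = r$, hence $a = r - b$ and $d = r - c$.

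Next I would just expand the $L_1$ distance between the two rows: $d([S]^1, [S]^2) = |a - c| + |b - d|$. Substituting $a - c = (r-b) - c = r - (b+c)$ and $b - d = b - (r-c) = -(r - (b+c))$, both absolute values equal $|r - (b+c)|$. Summing them yields $d([S]^1,[S]^2) = 2|r - (b+c)|$, and since the second eigenvalue is $\lambda_2 = r - (b+c)$ (as stated in the paragraph preceding the lemma), this is $2|\lambda_2|$.

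There is really no obstacle here: the lemma is a one-line consequence of the constant row-sum property of $S$. The only subtle observation worth stating explicitly in the writeup is the derivation of $a+b = c+d = r$ from the definition of a perfect coloring applied to an $r$-regular graph, which makes the two terms $|a-c|$ and $|b-d|$ automatically equal and allows the factor of $2$ to appear.
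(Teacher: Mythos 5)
Your proof is correct and follows essentially the same route as the paper's: both use the row-sum identities $a+b=c+d=r$ to reduce $|a-c|+|b-d|$ to $2|r-(b+c)|=2|\lambda_2|$. The only difference is that you explicitly re-derive the row-sum property from the definition of a perfect coloring, whereas the paper takes it as already established in the preceding paragraph.
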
 
\begin{proof}
By  equalities $a + b = c +d = r$ and $\lambda_2 = r - (b+c)$, we have
$$d([S]^1, [S]^2) =  |a - c| + |b - d| = 2 |r - (b+c)| = 2 |\lambda_2|.$$
\end{proof}

For further applications, we state Theorem~\ref{thingraphs} for perfect $(b,c)$-colorings.

\begin{teorema} \label{2coloringr}
Let $G$ be an $r$-regular graph and $f$ be a perfect $(b,c)$-coloring of $G$. Assume that there are vertices $u, v \in V(G)$ of different colors such that $|\mathcal{N}(u) \cap \mathcal{N}(v)| = h$. Then
$$ h \leq b+c \leq 2r - h. $$
If the left inequality attains an equality, then all vertices from $\mathcal{N}(u) \setminus\mathcal{N} (v)$ (and $\mathcal{N}(v) \setminus\mathcal{N} (u)$) have  the same color as the vertex $u$ (vertex $v$).  If the right inequality is achieved, then all vertices from $\mathcal{N}(u) \setminus\mathcal{N} (v)$ (or $\mathcal{N}(v) \setminus\mathcal{N} (u)$) have  the same color as the vertex $v$ (vertex $u$).  

At last, if the vertices $u$ and $v$ are adjacent, then $h+2 \leq b+c$.
\end{teorema}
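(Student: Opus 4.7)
The plan is to combine Theorem~\ref{thingraphs} with Lemma~\ref{2colordist}. Applying Theorem~\ref{thingraphs} to the pair $u,v$ of different colors gives $d([S]^1,[S]^2) \leq 2(r-h)$, and Lemma~\ref{2colordist} rewrites the left-hand side as $2|r-(b+c)|$. Dividing by $2$ and removing the absolute value yields the two-sided bound $h \leq b+c \leq 2r-h$.

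For the equality statements I invoke the second assertion of Theorem~\ref{thingraphs}: whenever the bound is attained, the color distributions in $\mathcal{N}(u)\cap\mathcal{N}(v)$, $\mathcal{N}(u)\setminus\mathcal{N}(v)$ and $\mathcal{N}(v)\setminus\mathcal{N}(u)$ are forced by $S$ alone. Using $[S]^1=(a,b)$ for $\mathcal{N}(u)$ and $[S]^2=(c,d)$ for $\mathcal{N}(v)$, I would compare the signed differences $a-c$ and $b-d$ with the $L_1$ decomposition that underlies Theorem~\ref{nonexpandth}. When $b+c=h$ one has $a-c=d-b=r-h>0$, so the "surplus" of color-$1$ in the symmetric difference sits entirely in $\mathcal{N}(u)\setminus\mathcal{N}(v)$ and the surplus of color-$2$ sits entirely in $\mathcal{N}(v)\setminus\mathcal{N}(u)$; since each of these pieces has size $r-h$, the former is monochromatic in color~$1$ and the latter in color~$2$, which is the claim. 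The right equality $b+c=2r-h$ only flips the signs of $a-c$ and $d-b$, exchanging the roles of the two colors in the two symmetric-difference pieces.

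For the last assertion I drop the main inequality and count directly. Since $G$ has no loops, $u\sim v$ forces $v \in \mathcal{N}(u)\setminus\mathcal{N}(v)$ and $u \in \mathcal{N}(v)\setminus\mathcal{N}(u)$, where $v$ has color~$2$ and $u$ has color~$1$. Splitting each neighborhood into its common part with the other and the symmetric-difference part, the quantities $b$ and $c$ decompose as
$$ b+c = h + |J_2 \cap (\mathcal{N}(u)\setminus\mathcal{N}(v))| + |J_1 \cap (\mathcal{N}(v)\setminus\mathcal{N}(u))|, $$
since the color-$1$ and color-$2$ contributions from $\mathcal{N}(u)\cap\mathcal{N}(v)$ together account for exactly its $h$ vertices. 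Each of the two remaining summands is at least $1$, witnessed by $v$ and $u$ themselves, so $b+c\geq h+2$.

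The main obstacle will be bookkeeping signs in the equality discussion, in particular making sure that "the color of $u$" lands in the correct half of the symmetric difference for each of the two inequalities; beyond that, each step is a short application of a previously established identity.
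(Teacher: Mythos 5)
Your proposal is correct and follows essentially the same route as the paper: the two-sided bound comes from combining Theorem~\ref{thingraphs} with Lemma~\ref{2colordist}, the equality cases force the sets $\mathcal{N}(u)\setminus\mathcal{N}(v)$ and $\mathcal{N}(v)\setminus\mathcal{N}(u)$ to be monochromatic, and the adjacent case is the same direct count using $v\in\mathcal{N}(u)\setminus\mathcal{N}(v)$ and $u\in\mathcal{N}(v)\setminus\mathcal{N}(u)$. Your sign-tracking in the equality discussion is in fact somewhat more explicit than the paper's informal ``minimal possible number of vertices of the other color'' argument, but it is the same underlying idea.
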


\begin{proof}
By Theorem~\ref{thingraphs}, we have
$$d([S]^{f(u)},[S]^{f(v)} ) \leq 2(r-h).$$
Since vertices $u$ and $v$ have different colors, Lemma~\ref{2colordist} gives
$$d([S]^{f(u)},[S]^{f(v)} ) =d([S]^{1},[S]^{2} ) = 2|r - (b+c)| \leq 2(r-h),$$
that is equivalent to the required inequalities. 

Equality  $2(r - (b+c)) =  2(r-h)$ means that the sum $b+c$ attains the minimal possible value. Then the set $\mathcal{N}(u)$ (and $\mathcal{N}(v)$) contains the minimal possible number of vertices whose color is different from $f(u)$ ($f(v)$). Similarly, if  $2(r - (b+c)) =  -2(r-h)$, then the sum $b+c$ attains the maximal possible  value, and $\mathcal{N}(u)$ ($\mathcal{N}(v)$) contains the maximal possible number of vertices with colors different from $f(u)$ ($f(v)$). Thus sets $\mathcal{N}(u) \setminus\mathcal{N} (v)$ and $\mathcal{N}(v) \setminus\mathcal{N} (u)$  are monochromatic. 

If vertices $u$ and $v$ are adjacent, then we can slightly improve the inequalities. Indeed, in this case  the sets $\mathcal{N}(u) \setminus\mathcal{N} (v)$ and $\mathcal{N}(v) \setminus\mathcal{N} (u)$  contain at least one vertex of color $\mathcal{N}(v)$ and $\mathcal{N}(u)$, respectively. It means that $2(r - (b+c)) \leq  2(r-h -2)$.
\end{proof}

\subsection{Colorings in distance graphs}

As before, we use $M$ for the adjacency matrix of a simple graph $G$. It is well known, that  powers of $M$ count the number of paths from one vertex of $G$ to another: the number of paths of length $l$ in $G$ from a vertex $u$ to a vertex $v$ is equal to the $(u,v)$-entry of the matrix $M^l$. We will say that $M^l$ is the adjacency matrix of the \textit{distance-$l$ graph $G^l$}. Note that in most cases $G^l$ is not a simple graph but a multigraph. 

In~\cite{my.perfstrct} it was proved the following.

\begin{utv} \label{perfinpoly}
If a triple of matrices $(M,P,S)$ is a perfect coloring, then for every polynomial $p (x)\in \mathbb{R}[x]$ the triple $(p(M), P, p(S))$ is also a perfect coloring. 
\end{utv}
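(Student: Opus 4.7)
The plan is to observe that among the three requirements on a triple $(p(M), P, p(S))$ only the matrix identity $p(M) \cdot P = P \cdot p(S)$ needs genuine verification: the middle matrix $P$ is the same $(0,1)$-matrix with one unity per row as in the original triple, and $p(M)$ and $p(S)$ are square matrices of the correct orders $n$ and $k$ respectively. So the whole argument reduces to an intertwining calculation starting from the hypothesis $MP = PS$.

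First, I would prove by induction on $l \geq 0$ the monomial version $M^l P = P S^l$. The base $l = 0$ is immediate from $M^0 = I_n$ and $S^0 = I_k$. For the inductive step, assuming $M^l P = P S^l$, left-multiplication by $M$ gives
$$M^{l+1} P \;=\; M \bigl(M^l P\bigr) \;=\; M \bigl(P S^l\bigr) \;=\; (MP) S^l \;=\; (PS) S^l \;=\; P S^{l+1},$$
where associativity and the hypothesis $MP = PS$ are the only tools used.

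Second, I would extend the identity from monomials to arbitrary polynomials by $\mathbb{R}$-linearity: writing $p(x) = \sum_{l} c_l x^l$, summation of the monomial identities weighted by the coefficients $c_l$ yields $p(M) P = P p(S)$ directly.

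Honestly, there is no substantive obstacle in this proposition; it is the elementary fact that any matrix intertwining two given matrices also intertwines every polynomial of them. The only point worth being careful about is checking that nothing about the combinatorial structure of $P$ is disturbed when we replace $M, S$ by $p(M), p(S)$, which is trivial since $P$ itself is left untouched.
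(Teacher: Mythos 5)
Your argument is correct and complete: the induction $M^lP = PS^l$ followed by $\mathbb{R}$-linear extension is exactly the standard intertwining argument, and you rightly note that the combinatorial conditions on $P$ are unaffected. The paper itself does not prove this proposition but cites it from an external reference, so there is no in-paper proof to compare against; your write-up supplies the expected argument with no gaps.
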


So we can specialize Theorem~\ref{nonexpandth}  for distance-$l$ graphs. 

\begin{teorema} \label{distgraphs}
Let $G$ be a simple graph and $f$ be a perfect coloring of $G$ with the parameter matrix $S$. Then for all $l \in \mathbb{N}$ and for all vertices $u,v \in V(G)$ it holds
$$ d([M^l]^u, [M^l]^v) \geq   d([S^l]^{f(u)}, [S^l]^{f(v)}) .$$
\end{teorema}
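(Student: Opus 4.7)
The plan is to combine Proposition \ref{perfinpoly} with Theorem \ref{nonexpandth} in a direct way. The key observation is that $M^l$ is the adjacency matrix of the graph $G^l$ in the generalized sense used throughout the paper (a matrix-weighted graph), so any instance of Theorem \ref{nonexpandth} applies to $G^l$ verbatim, provided we know that the original coloring $f$ remains perfect for $G^l$ with some parameter matrix.

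First, I would apply Proposition \ref{perfinpoly} to the perfect coloring $(M, P, S)$ associated to $f$, using the polynomial $p(x) = x^l \in \mathbb{R}[x]$. This yields that $(M^l, P, S^l)$ is itself a perfect coloring. By the definition recalled at the start of the paper, this means exactly that the same map $f$ (encoded by $P$) is a perfect $k$-coloring of the graph $G^l$, whose adjacency matrix is $M^l$, and its parameter matrix is $S^l$.

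Next, I would invoke Theorem \ref{nonexpandth} with $G$ replaced by $G^l$, with $M$ replaced by $M^l$, and with $S$ replaced by $S^l$. This gives immediately
$$d([M^l]^u, [M^l]^v) \geq d([S^l]^{f(u)}, [S^l]^{f(v)})$$
for all $u, v \in V(G) = V(G^l)$, which is the claimed inequality.

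There is no real obstacle: the whole content is packaged in Proposition \ref{perfinpoly}, which lets us transfer the perfect coloring structure from $(M,S)$ to $(M^l, S^l)$, and then Theorem \ref{nonexpandth} is a black box applied in the new setting. The only thing worth checking is that Theorem \ref{nonexpandth} was stated for the general matrix-weighted graphs introduced in Section 1 and not for simple graphs only — which is indeed the case, so $M^l$ having integer entries $> 1$ poses no issue.
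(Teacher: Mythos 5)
Your proof is correct and is exactly the argument the paper gives (the paper's own proof is the one-line remark that the result follows from Proposition~\ref{perfinpoly} and Theorem~\ref{nonexpandth}); you have simply spelled out the details. The check that Theorem~\ref{nonexpandth} applies to general matrix-weighted graphs, so that the multigraph $G^l$ causes no trouble, is the right point to verify.
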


\begin{proof}
The result follows from Theorem~\ref{nonexpandth} and Proposition~\ref{perfinpoly}.
\end{proof}

In special graphs, it is possible to express some subsets of vertices by the means of a polynomial on the adjacent matrix. One of the most famous examples of such sets and graphs are balls and spheres in distance-regular graphs.

For vertices $u,v$ of a simple graphs $G$, let $\rho(u,v)$ denote the distance between $u$ and $v$ (the length of the shortest path between them). The \textit{ball} $B_r(u)$ of a radius $r$ and with center  $u$ is a set $\{ v : \rho(u,v) \leq r \}$, and the \textit{sphere} $W_r(u)$ of a radius $r$ and with center  $u$ is a set $\{ v  :\rho(u,v)  =  r \}$. 

It is well known (see, e.g.~\cite{BrCohNeu.distreggraph}) that in a distance-regular graph $G$ for every $r \leq diam(G)$ there are polynomials $p_r^B$ and $p_r^W$ such that for each $u \in V(G)$  rows $[p^B_r (M)]^u$ and $[p^W_r (M)]^u$ are the indicator functions of a ball $B_r(u)$ and a sphere $W_r(u)$, respectively.   Thus, for distance-regular graphs we have the following theorem.

\begin{teorema} \label{distreggraphs}
Let $G$ be a simple distance-regular graph with polynomials $p_r^B(x)$ and $p_r^W(x)$ expressing balls and spheres of radius $r$ in $G$, respectively.  Suppose that $f$ is a perfect coloring of $G$ with the parameter matrix $S$. Then  for all vertices $u,v \in V(G)$ 
\begin{gather*}
|B_r(u) \Delta B_r(v)|  \geq   d([p_r^B(S)]^{f(u)}, [p_r^B(S)]^{f(v)}); \\
|W_r(u) \Delta W_r(v)|  \geq   d([p_r^W(S)]^{f(u)}, [p_r^W(S)]^{f(v)}).
\end{gather*}
\end{teorema}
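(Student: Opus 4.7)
The proof will be a straightforward two-step reduction, combining Proposition~\ref{perfinpoly} with Theorem~\ref{nonexpandth}. The key observation is that the polynomials $p_r^B$ and $p_r^W$ have been introduced precisely so that rows of $p_r^B(M)$ and $p_r^W(M)$ are $(0,1)$-indicator vectors, which makes the $L_1$ distance between two such rows equal to the cardinality of a symmetric difference.

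The plan is as follows. First, I would apply Proposition~\ref{perfinpoly} to the perfect coloring $(M,P,S)$ with the polynomial $p_r^B$. This yields a new perfect coloring $(p_r^B(M), P, p_r^B(S))$ on the same vertex set with the same color classes, now regarded as a coloring of the graph whose adjacency matrix is $p_r^B(M)$. Applying Theorem~\ref{nonexpandth} to this new perfect coloring gives, for every pair of vertices $u,v \in V(G)$,
$$d([p_r^B(M)]^u, [p_r^B(M)]^v) \geq d([p_r^B(S)]^{f(u)}, [p_r^B(S)]^{f(v)}).$$

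Next, I would use the defining property of $p_r^B$: since $[p_r^B(M)]^u$ is the indicator vector of the ball $B_r(u)$, the $L_1$ distance between two such $(0,1)$-vectors is exactly the size of the symmetric difference of the corresponding supports, namely
$$d([p_r^B(M)]^u, [p_r^B(M)]^v) = |B_r(u)\,\Delta\, B_r(v)|.$$
Combining this identity with the previous inequality gives the first claim of the theorem. The second claim for spheres is obtained by the identical argument applied to the polynomial $p_r^W$ in place of $p_r^B$.

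There is no real obstacle: the only thing to check is that $[p_r^B(M)]^u$ is indeed $(0,1)$-valued, which is guaranteed by the hypothesis that it is the indicator of $B_r(u)$; and likewise for $p_r^W$. The rest is a direct invocation of the two previously established results.
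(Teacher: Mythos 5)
Your proof is correct and follows exactly the route the paper intends: apply Proposition~\ref{perfinpoly} with $p_r^B$ (resp.\ $p_r^W$), invoke Theorem~\ref{nonexpandth} for the resulting perfect coloring, and identify the $L_1$ distance between the $(0,1)$-indicator rows of $p_r^B(M)$ (resp.\ $p_r^W(M)$) with the cardinality of the symmetric difference of balls (resp.\ spheres). The paper leaves this argument implicit, so your write-up simply makes explicit the same two-step reduction.
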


\section{Applications and examples} \label{exampsec}

The above results can be applied to reject some putative parameter matrices of perfect colorings for a given graph $G$. For perfect $2$-colorings, our method is more useful if the second eigenvalue of the parameter matrix has a large absolute value. It is especially interesting for infinite graphs (i.e., graphs with an infinite number of vertices) because the standard spectral condition on the existence of perfect colorings is not applicable for them.

\subsection{Square and triangular grids}

Our first example is a simple proof that there are no perfect $(4,3)$-colorings of the square grid. The \textit{square grid} is an infinite $4$-regular graph with the vertex set $\mathbb{Z}^2$ and edges $((x,y), (x+1, y))$ and $((x,y), (x, y+1))$ for $x, y \in \mathbb{Z}$. 

Suppose that $f$ is a perfect $(4,3)$-color of the square grid. By Theorem~\ref{2coloringr}, there are no vertices $u = (x,y)$ and $v = (x+1, y+1)$ in the square grid of different colors in the coloring $f$ because $h = |\mathcal{N}(u) \cap \mathcal{N}(v) | = 2$ and  $7 = b + c > 2r - h = 6$. So for a given vertex $(x,y)$ all vertices $(x + t, y +t)$, $t \in \mathbb{Z}$ have the same color in $f$ as the vertex $(x,y)$. Then every vertex is adjacent to an even number of vertices of each color that contradicts to the parameters of  $f$.

An approach similar to the presented one was used in a characterization of $3$-colorings in the square grid~\cite{puzynina.3perfsqgr} and in studying multiple coverings of the square grid with balls of a constant radius~\cite{axenovich.multcovsqgr}.

We can also apply this technique to perfect $2$-colorings in the triangular grid that is an infinite $6$-regular graph with the following local structure:
\begin{center}
		\includegraphics[width=0.3\linewidth]{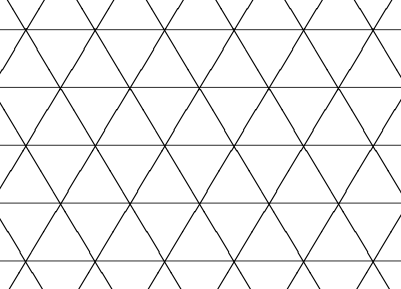}
\end{center}

For every adjacent vertices $u$ and $v$ in the triangular grid we have $h = |\mathcal{N}(u) \cap \mathcal{N}(v) | = 2$. By Theorem~\ref{2coloringr}, for every perfect $(b,c)$-coloring of the triangular  it holds $4 \leq b + c \leq 10$. In particular, there are no perfect $(1,1)$-, $(2,1)$-, $(6,5)$-, and $(6,6)$-colorings in the triangular grid.

One can also show that there are no perfect $(3,1)$-, $(5,5)$-, and $(6,4)$-colorings of the triangular grid (when $b+c$ achieves one of the possible equalities). Indeed, Theorem~\ref{2coloringr} allows us to determine colors of vertices in sets $\mathcal{N}(u) \setminus (\mathcal{N}(v) \cup \{ v\})$ and $\mathcal{N}(v) \setminus (\mathcal{N}(u) \cup \{ u\})$ that gives a contradiction to the parameters of the coloring at the vertex $w$:
\begin{center}
	\includegraphics[width=0.3\linewidth]{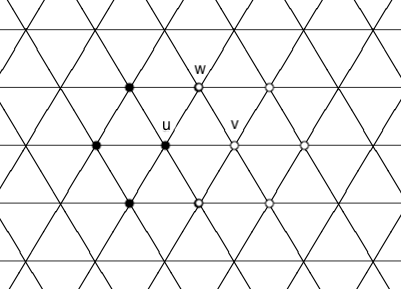} ~~~ 	\includegraphics[width=0.3\linewidth]{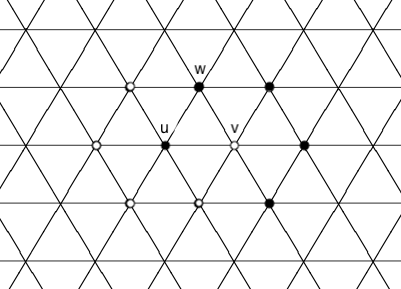}  ~~~
		\includegraphics[width=0.3\linewidth]{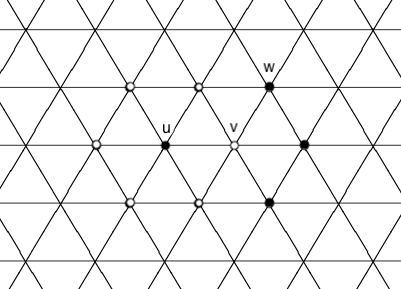} 
\end{center}

The similar reasoning implies that there is a unique (up to transformations of the plane) perfect $(2,2)$-coloring of the triangular grid:

\begin{center}
		\includegraphics[width=0.3\linewidth]{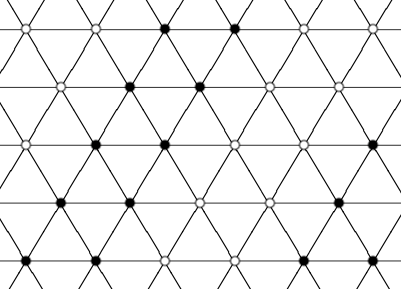}
\end{center}

For more information on perfect $2$-colorings of the triangular grid see~\cite{puzynina.perhextri}.

\subsection{Circulant graphs}

Our results can be widely used for perfect colorings in circulant graphs.  Perfect colorings in some classes of such graphs were previously studied in~\cite{parshina.circcontsist,ParLis.circcontoddsist}.

Given a (multi)set  $D = \{d_1, \ldots, d_m\}$, $d_i \in \mathbb{N}$, a \textit{circulant} (multi)graph $C(d_1, \ldots, d_m)$ is a $2m$-regular (multi)graph with the vertex set $\mathbb{Z}$ and edges $(x,y)$, where $|x - y| \in D$. It is easy to see that every perfect coloring $f$ of $C(d_1, \ldots, d_m)$ is periodic: there is some $T \in \mathbb{N}$ such that $f(x + T ) = f(x)$ for all $x \in \mathbb{Z}$. 
 
\begin{teorema}
Assume that for a (multi)set $D = \{ d_1, \ldots, d_m \}$ and for  $t \in \mathbb{N}$ we have 
$$|\{ \pm d_1, \ldots, \pm d_m\} \cap \{ t \pm d_1, \ldots, t \pm d_m\} |= h .$$
If $b +c > 4m - h$ or $b +c < h$ (or $b+c < h+2$ if $t  \in D$), then the period $T$ of every perfect  $(b,c)$-coloring of the circulant (multi)graph $C(d_1, \ldots, d_m)$ divides $t$.
\end{teorema}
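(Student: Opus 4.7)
The plan is to apply Theorem~\ref{2coloringr} to the pair of vertices $u = x$ and $v = x + t$ for an arbitrary $x \in \mathbb{Z}$. By translation invariance of the circulant (multi)graph, the neighborhoods satisfy $\mathcal{N}(u) = x + \{\pm d_1, \ldots, \pm d_m\}$ and $\mathcal{N}(v) = x + \{t \pm d_1, \ldots, t \pm d_m\}$ (viewed as multisets), so the common-neighbor count equals $|\{\pm d_1, \ldots, \pm d_m\} \cap \{t \pm d_1, \ldots, t \pm d_m\}| = h$, precisely the quantity in the hypothesis.

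Since $C(d_1, \ldots, d_m)$ is $2m$-regular, Theorem~\ref{2coloringr} with $r = 2m$ says that if $f(u) \neq f(v)$ then $h \leq b + c \leq 4m - h$, with the strengthened lower bound $b + c \geq h + 2$ whenever $u$ and $v$ are adjacent; and $u \sim v$ happens exactly when $t \in D$. Contraposing these inequalities, each of the hypotheses $b + c > 4m - h$, $b + c < h$, or ($t \in D$ and $b + c < h + 2$) forces $f(x) = f(x + t)$.

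Since this conclusion holds independently of the chosen $x$, the integer $t$ is itself a period of $f$. The minimal period $T$ must then divide $t$, which is the desired conclusion.

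The only mild technical point is the multi-edge case: when $D$ is a genuine multiset, $\mathcal{N}(u) \cap \mathcal{N}(v)$ should be read as a multiset intersection, consistent with the adjacency-matrix row distance $d([M]^u, [M]^v) = 2(2m - h)$. With this bookkeeping the derivation of Theorem~\ref{2coloringr} from Theorem~\ref{nonexpandth} and Lemma~\ref{2colordist} transfers verbatim, so no substantial obstacle is anticipated beyond keeping track of the multiplicities.
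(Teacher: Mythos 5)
Your proof is correct and follows essentially the same route as the paper: identify $h$ as the common-neighbor count of $x$ and $x+t$, apply Theorem~\ref{2coloringr} (contrapositively) to force $f(x)=f(x+t)$ for all $x$, and conclude $T \mid t$. Your explicit remark about reading $\mathcal{N}(u)\cap\mathcal{N}(v)$ as a multiset intersection in the multigraph case is a welcome extra precaution that the paper's own proof leaves implicit.
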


\begin{proof}
Let $f$ be a perfect $(b,c)$-coloring of the (multi)graph $C(d_1, \ldots, d_m)$  with period $T$.  If $T$ is not a divisor of $t$, then there are vertices $x$ and $x + t$ in $C(d_1, \ldots, d_m)$  that are colored with different colors by $f$. By the condition of the theorem, $|\mathcal{N}(x) \cap \mathcal{N}(x +t) | = h$.  It only remains to note that the demanded inequalities on $b+c$ contradict to Theorem~\ref{2coloringr}.
\end{proof}

For example, consider a circulant graph $C(1,2,4)$. For $t = 3$ we have 
$$ h = |\{ \pm 1,  \pm 2, \pm 4 \} \cap \{ 3 \pm 1, 3 \pm 2, 3 \pm 4 \}| = 4. $$
So all $(b,c)$-colorings of $C(1,2,4)$ with  $b+c < 4$ or $b+c >8$ have a period $T$ such that $T$ divides $3$, and so $T = 3$. Searching all colorings of $C(1,2,4)$ of period $3$, it is easy to see that there are no $(1,1)$-, $(2,1)$-, $(5,4)$-, $(5,5)$-, $(6,4)$-, $(6,5)$-, and $(6,6)$-colorings among them. Therefore, these colorings do not exist in $C(1,2,4)$.

\section*{Acknowledgements}

This work was carried out within the framework of the state contract
of the Sobolev Institute of Mathematics (project no. 0314-2019-0016).

\begin{bibdiv}
    \begin{biblist}[\normalsize]
    \bibselect{biblio}
    \end{biblist}
    \end{bibdiv}

\end{document}